\documentclass{amsart}

\usepackage{hyperref, color}

\theoremstyle{plain}
\newtheorem{theorem}                {Theorem}      [section]
\newtheorem*{theorem*}                {Theorem \ref{thm:appl}}
\newtheorem{proposition}  [theorem]  {Proposition}
\newtheorem{corollary}    [theorem]  {Corollary}

\theoremstyle{definition}

\numberwithin{equation}{section}

\begin{document}

\title[Remarks on biharmonic hypersurfaces in space forms]
{Remarks on biharmonic hypersurfaces in space forms}

\author[Costa-Filho]{Wagner Oliveira Costa-Filho}
\address{Campus Arapiraca, Federal University of Alagoas, CEP 57309-005, 
Arapiraca,  Alagoas, Brazil}
\email{fcow@bol.com.br}

\subjclass[2010]{Primary 53C43; Secondary 53C42, 53C24.}
\date{\today}
\keywords{biharmonic immersions; space forms; closed hypersurfaces.}

\begin{abstract}  
We consider closed biharmonic hypersurfaces in the  Euclidean sphere and prove a rigidity result under a suitable condition on the scalar curvature. Moreover, we  establish an  integral formula involving the position vector for  biharmonic hypersurfaces in  space forms.
\end{abstract}

\maketitle

\section{Introduction}

The theory of biharmonic maps plays a fundamental role in many branches of Partial Differential Equations and Differential Geometry. The notion of biharmonic maps, as a natural generalization of harmonic maps, was introduced in 1964 by Eells and Sampson \cite{eells}, and the related concept of $k$-energy was considered in \cite{eells1}, by Eells and Lemaire.

Let $(M,g)$ and $(\bar{M},\bar{g})$ be Riemannian manifolds. A smooth map $f:M\to \bar{M}$ is said to be \textit{biharmonic} if it is a critical point of the \textit{bienergy functional} $$E_2(f)=\frac{1}{2}\int_M|\tau(f)|^2\, dM,$$
under compactly supported variations. Here, $\tau(f)$ is the tension field of $f$ given by $\tau(f)=tr(\nabla df)$. Notice that $\tau(f)=0$ means the map is harmonic. In the pioneer work \cite{jiang}, Jiang derived the variational formulas of the bienergy (see Urakawa \cite{urakawa}, chapter two). The Euler-Lagrange equation of this variational problem is $$\tau_2(f)=\Delta(\tau(f))-tr[\bar{R}(df,\tau(f))df]=0,$$
where $\Delta$ is the rough Laplacian and $\bar{R}$ denote the curvature tensor of $\bar{M}$. We use the sign conventions for the Laplacian operator and for the curvature tensor as in \cite{urakawa}. A submanifold $M$ of $\bar{M}$ is called biharmonic if the isometric immersion $x:M \to \bar{M}$ is a biharmonic map. In this case, the mean curvature vector $\textbf{H}$ satisfies the equation $$\Delta(\textbf{H})-tr[\bar{R}(dx,\textbf{H})dx]=0.$$

The study of biharmonic submanifolds in Euclidean spaces was initiated independently and from a different point of view by Chen in \cite{chen}, which proposed the following conjecture that remains open, but verified in many partial results: \textit{Every biharmonic submanifold of the Euclidean space must be minimal}. 

For more details about biharmonic maps and submanifolds we refer to Oniciuc \cite{oniciuc} and the recent book of Ou and Chen  \cite{ou1}.

One of the essential questions on biharmonic submanifolds is to comprehend their geometric aspects also in general ambient manifolds, notably space forms. Their classification is an active research field, and for a recent survey we indicate Fetcu and Oniciuc \cite{fetcu} as well as Ou \cite{ou2}.

For instance, Maeta \cite{maeta1} proposed the following global problem: \textit{Any complete biharmonic submanifold of a non-positively curved manifold is a minimal one}. On the other hand, Balmu\c{s}, Montaldo and Oniciuc in \cite{balmus} conjectured that \textit{biharmonic submanifolds in the unit Euclidean sphere $\mathbb{S}^{n+1}$ must have constant mean curvature}.

We point out that the main examples of non-minimal biharmonic hypersurfaces in $\mathbb{S}^{n+1}$ are $\mathbb{S}^n(1/\sqrt{2})$ and $\mathbb{S}^{n_1}(1/\sqrt{2})\times \mathbb{S}^{n_2}(1/\sqrt{2})$, $n_1+n_2=n, n_1\neq n_2$ (see Balmu\c{s}, Montaldo and Oniciuc \cite{balmus1}). Here, $ 1/\sqrt{2}$ denotes the radius of the sphere.

It was recently proved by Maeta and Ou in \cite{maeta2} that if a closed biharmonic hypersurface $M$ in $\mathbb{S}^{n+1}$ has constant scalar curvature, then $M$ is minimal or it has non-zero constant mean curvature, which gives a partial affirmative answer to the conjecture proposed by Balmu\c{s}, Montaldo and Oniciuc. See also the recent paper \cite{bibi} by Bibi, Loubeau and Oniciuc for another contribution that supports this conjecture.

Now let us consider an isometric immersion $x:M^n\to \bar{M}^{n+1}(c)$ of an orientable connected Riemannian manifold $M^n$ into the complete simply connected space form $\bar{M}^{n+1}(c)$ of constant curvature $c.$ Let $A$ denote the Weingarten operator of $M$ in $\bar{M}$ with respect to a unit normal vector field and $H=\frac{1}{n}trA$ the mean curvature function of $M.$ It follows from the Gauss equation that
\begin{equation}\label{scalar}
    n(n-1)(S-c)=n^2H^2-|A|^2,
\end{equation}
where $S$ denotes the normalized scalar curvature of $x$ and $|A|^2$ the squared norm of $A.$ We will denote by $\nabla$ the gradient operator of $M$. We say that $M$ is a closed manifold if it is compact and without boundary.

In this paper, generalizing the main result in \cite{maeta2}, we prove that

\begin{theorem}
Let $x:M^n \to \mathbb{S}^{n + 1}(c)$ be a closed biharmonic hypersurface. If $$\langle \nabla H^2, \nabla S \rangle \leq 0 ,$$ then $M$ has constant mean curvature.
\end{theorem}

We recall that a hypersurface in a space form is said to be \textit{linear Weingarten} if $S=aH+b$ for some constants $a,b\in \mathbb{R}.$ The linear Weingarten hypersurfaces can be regarded as a extension of hypersurfaces with constant mean curvature or constant scalar curvature and have been studied by many authors (see e.g.\cite{MR4117507}). So, from the above result, we obtain

\begin{corollary}
Let $x:M^n \to \mathbb{S}^{n + 1}(c)$ be a closed biharmonic linear Weingarten hypersurface with $S\leq b$, then $M$ has constant mean curvature.
\end{corollary}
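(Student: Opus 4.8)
The plan is to reduce the statement directly to the Theorem above, the only nontrivial point being to check that the linear Weingarten hypothesis together with $S\le b$ forces the sign condition $\langle\nabla H^2,\nabla S\rangle\le 0$. Since $M$ is linear Weingarten, $S=aH+b$ for constants $a,b\in\mathbb{R}$, so taking gradients gives $\nabla S=a\,\nabla H$. Combining this with $\nabla H^2=2H\,\nabla H$ yields
$$\langle\nabla H^2,\nabla S\rangle=\langle 2H\,\nabla H,\,a\,\nabla H\rangle=2aH\,|\nabla H|^2\qquad\text{on }M.$$
The hypothesis $S\le b$ reads $aH+b\le b$, that is, $aH\le 0$ at every point of $M$. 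Hence $\langle\nabla H^2,\nabla S\rangle=2aH\,|\nabla H|^2\le 0$ everywhere on $M$, and the Theorem applies to conclude that $M$ has constant mean curvature.

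I would also isolate the degenerate case $a=0$: there $S\equiv b$ is constant, and the conclusion follows either from the Theorem (the pairing vanishes identically) or from the earlier result of Maeta and Ou \cite{maeta2}. Note that no assumption on the sign of the mean curvature function itself is required: even if $H$ changes sign on $M$, the product $aH$ is pinned down pointwise by the inequality $S\le b$, and this is precisely the flexibility that makes the linear Weingarten condition compatible with the pairing hypothesis of the Theorem; likewise the argument is insensitive to the choice of unit normal and the attendant sign convention for $H$.

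In short, there is no real obstacle here: all of the analytic content is carried by the proof of the Theorem, and the corollary is obtained by the elementary observation that $\nabla S$ is proportional to $\nabla H$ under the linear Weingarten condition, with the proportionality constant $a$ satisfying $aH\le 0$ thanks to $S\le b$.
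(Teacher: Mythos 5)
Your proof is correct and is exactly the derivation the paper intends (the paper states the corollary without writing out the argument): the linear Weingarten condition gives $\nabla S = a\nabla H$, the hypothesis $S\le b$ gives $aH\le 0$ pointwise, so $\langle\nabla H^2,\nabla S\rangle = 2aH|\nabla H|^2\le 0$ and the Theorem applies. The asides about the case $a=0$ and the choice of normal are harmless but not needed, since the pairing inequality already covers them.
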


Our next result present an integral formula for biharmonic hypersurfaces involving the distance function. 
First, we introduce some definitions about
the position vector in a complete simply connected space form
(see e.g. Vlachos \cite{vlachos} and references therein).
Let $d:\bar{M}^{n+1}(c)\to \mathbb{R}$ be the distance function 
relative to the basis point $x_0\in \bar{M}^{n+1}(c).$ The 
position vector of an arbitrary hypersurface $M$ in $\bar{M}^{n+1}(c)$, $x_0\notin M$, is given by 
$X=\psi_c(d)\bar{\nabla}d$, where $\bar{\nabla}d$ stands for the gradient of $d$ on $\bar{M}$ away from $x_0$, and $\psi_c(t)$ is the solution of 
the differential equation $y''+cy=0$ satisfying the boundary conditions $y(0)=0$ and $y'(0)=1.$ Denote by $\theta_c(t)=\frac{d}{dt}\psi_c(t)$ 
and $\theta_c = \theta_c(d)$.
In this setting we prove

\begin{theorem}
Let $x:M^n \to \bar{M}^{n + 1}(c)$ be a closed biharmonic hypersurface. Then,$$\int_M\theta_c(H^2-c)\,dM=0.$$
\end{theorem}

By using this result for $c>0$ we recover, directly and in a quite different method, Vieira's principal Theorem in \cite{vieira}. Moreover, if $c\leq 0$, we reobtain that 
closed biharmonic hypersurfaces in $\bar{M}^{n+1}(c)$ do not exist. 
In fact,  if $M$ lies in a hemisphere of $\mathbb{S}^{n+1}(c)$ then $\theta_c \geq 0$ and we always have that $\theta_c>0$ for $c\leq 0.$ 

\medskip

Finally, in the last section of this work, we presenting a new result 
regarding the Cheng-Yau operator on biharmonic hypersufaces
in spheres.

\section{Preliminaries}

This section contains some basic facts that we will use in order to prove our results. Consider the notations and conditions introduced above. Let $N$ be a unit normal vector field to $M.$ Bellow we denote by $\rho$ the support function of $x$ with respect to $x_0$, that is,  $\rho:M\to \mathbb{R}$ defined by $\rho(p) = \langle X(p), N(p) \rangle$ and 
$x^T$ the vector field on $M$ given by $x^T=X-\rho N.$  
In this context, we have 

\begin{proposition}[\cite{vlachos}, page 339] \label{1}
Let $ x: M^n \to \bar{M}^{n + 1}(c)$ be an orientable hypersurface. Then,
\begin{enumerate}
    \item[(a)] $\Delta(\rho)=-n\theta_cH-\rho|A|^2-n\langle \nabla H, x^T \rangle,$
    \item[(b)] $\nabla \rho=-A(x^T),$
\end{enumerate}
where $\Delta$ is the Laplace-Beltrami operator on $M$ such that $\Delta u=\emph{div}(\nabla u)$ for any smooth function  $u$ on $M$.
\end{proposition}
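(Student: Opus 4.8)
The plan is to reduce both identities to a single structural fact about the position vector, namely that in a space form $X=\psi_c(d)\bar\nabla d$ is a closed conformal vector field with
$$\bar\nabla_V X=\theta_c\,V \quad\text{for every } V \text{ tangent to } \bar{M}^{n+1}(c),$$
where $\bar\nabla$ now also denotes the Levi-Civita connection of $\bar{M}$. I would obtain this from the explicit Hessian of the distance function in constant curvature,
$$\mathrm{Hess}\,d(V,W)=\frac{\theta_c}{\psi_c}\bigl(\langle V,W\rangle-\langle V,\bar\nabla d\rangle\langle W,\bar\nabla d\rangle\bigr),$$
together with $\bar\nabla\bigl(\psi_c(d)\bigr)=\theta_c\,\bar\nabla d$. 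Writing $\bar\nabla_V X=V\bigl(\psi_c(d)\bigr)\bar\nabla d+\psi_c(d)\,\bar\nabla_V\bar\nabla d$ and substituting, the two radial contributions cancel and exactly $\theta_c V$ remains. This identity, the Weingarten formula $\bar\nabla_V N=-AV$, and the orthogonal splitting $X=x^T+\rho N$ are the only ingredients required.

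Part (b) is then immediate: for a tangent field $V$,
$$V(\rho)=\langle\bar\nabla_V X,N\rangle+\langle X,\bar\nabla_V N\rangle=\theta_c\langle V,N\rangle-\langle X,AV\rangle.$$
The first term vanishes since $V\perp N$, and because $AV$ is tangent only the $x^T$ part of $X$ contributes, so $V(\rho)=-\langle x^T,AV\rangle=-\langle A\,x^T,V\rangle$ using that $A$ is self-adjoint. As $V$ is arbitrary, $\nabla\rho=-A(x^T)$.

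For part (a) I would compute $\Delta\rho=\sum_i e_i(e_i\rho)$ at a point $p$ using a local orthonormal frame $\{e_i\}$ that is geodesic at $p$, starting from $e_i\rho=-\langle X,Ae_i\rangle$. Differentiating once more and inserting $\bar\nabla_{e_i}X=\theta_c e_i$ and the Gauss formula $\bar\nabla_{e_i}(Ae_i)=\nabla_{e_i}(Ae_i)+|Ae_i|^2N$, I obtain three groups of terms. Summing $-\theta_c\langle e_i,Ae_i\rangle$ gives $-n\theta_c H$; summing $-\rho\,|Ae_i|^2$ gives $-\rho\,|A|^2$; and there remains the tangential piece $-\sum_i\langle x^T,\nabla_{e_i}(Ae_i)\rangle$.

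The crux is this last sum. At $p$ one has $\sum_i\nabla_{e_i}(Ae_i)=\sum_i(\nabla_{e_i}A)e_i$, and here I would invoke the Codazzi equation, which in a space form makes the covariant derivative of the second fundamental form totally symmetric. Pairing $(\nabla_{e_i}A)e_i$ with $e_j$ and using this symmetry to transfer the differentiation onto the trace yields the classical divergence identity $\sum_i(\nabla_{e_i}A)e_i=n\nabla H$, so the remaining piece equals $-n\langle\nabla H,x^T\rangle$. Combining the three contributions gives (a). I expect the Codazzi step to be the only genuinely substantial point; once $\bar\nabla_V X=\theta_c V$ is established, the rest is careful tangential/normal bookkeeping through $X=x^T+\rho N$.
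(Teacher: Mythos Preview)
Your argument is correct. The identity $\bar\nabla_V X=\theta_c V$, the Gauss--Weingarten formulas, and the Codazzi identity $\sum_i(\nabla_{e_i}A)e_i=n\nabla H$ are exactly the right ingredients, and your bookkeeping through $X=x^T+\rho N$ is sound in both parts.

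There is, however, nothing to compare against: the paper does not prove this proposition. It is stated with the attribution ``\cite{vlachos}, page 339'' and no proof is given. The only overlap is that in the subsequent proof of the Minkowski formula the paper invokes the same conformal identity $\bar\nabla_V X=\theta_c(d)V$ (citing \cite{vlachos}, page 338) that you take as your starting point, so your derivation is fully consistent with the framework the paper adopts. In short, you have supplied a complete proof where the paper offers only a reference.
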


In the following we present the well known Minkowski integral formula. For the sake of completeness, we include a proof, which is  based on the approach of \cite{vlachos}.

\begin{proposition}\label{Minkowski}
Let $ x: M^n \to \bar{M}^{n + 1}(c) $ be a closed hypersurface. 
Then, $$ \int_M (\theta_c + H \rho) \, dM = 0.$$
\end{proposition}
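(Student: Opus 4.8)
The plan is to integrate the first part of Proposition~\ref{1} over the closed manifold $M$ and exploit the fact that $M$ has no boundary. By the divergence theorem, $\int_M \Delta(\rho)\,dM = \int_M \mathrm{div}(\nabla\rho)\,dM = 0$. Hence, integrating the identity in part (a),
\begin{equation*}
0 = \int_M \Delta(\rho)\,dM = -\int_M \big( n\theta_c H + \rho|A|^2 + n\langle \nabla H, x^T\rangle \big)\,dM,
\end{equation*}
so the whole task reduces to showing that $\int_M \langle \nabla H, x^T\rangle\,dM$ can be rewritten in terms of $\rho$ and $|A|^2$, and that everything collapses to the clean Minkowski formula.

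The key observation is that $x^T = X - \rho N$ is, up to a factor, the tangential gradient of (a function of) the distance $d$; more precisely the restriction of $\psi_c(d)\bar\nabla d$ to $M$ projects to $x^T$, and one has the standard fact that $\mathrm{div}(x^T) = n\theta_c - n H\langle X, N\rangle = n\theta_c - nH\rho$ (this is exactly the computation underlying the classical Minkowski formulas; it follows by differentiating $X$ along $M$, using $\bar\nabla_V X = \theta_c V$ for $V$ tangent to $\bar M$ together with the Weingarten formula). Granting this, I would apply the divergence theorem once more to the vector field $H\,x^T$:
\begin{equation*}
0 = \int_M \mathrm{div}(H\,x^T)\,dM = \int_M \langle \nabla H, x^T\rangle\,dM + \int_M H\,\mathrm{div}(x^T)\,dM = \int_M \langle \nabla H, x^T\rangle\,dM + \int_M H(n\theta_c - nH\rho)\,dM.
\end{equation*}

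Combining the two displayed identities eliminates the term $\int_M \langle \nabla H, x^T\rangle\,dM$:
\begin{equation*}
0 = -\int_M \big( n\theta_c H + \rho|A|^2 \big)\,dM + \int_M \big( n H\theta_c - nH^2\rho \big)\,dM = -\int_M \rho\,(|A|^2 + nH^2)\,dM + \ldots
\end{equation*}
and here I would instead carry out the bookkeeping carefully: the two $\theta_c H$ terms should combine, and the remaining terms $\rho|A|^2$ and $nH^2\rho$ must be reconciled — which is where the Gauss equation \eqref{scalar} may enter if needed, though for the pure Minkowski formula the identity $\mathrm{div}(x^T) = n(\theta_c - H\rho)$ already gives the result directly without invoking part (a) at all. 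In fact the cleanest route is simply: $0 = \int_M \mathrm{div}(x^T)\,dM = \int_M n(\theta_c - H\rho)\,dM$, which is precisely the claimed formula after dividing by $n$.

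The main obstacle is establishing the divergence identity $\mathrm{div}(x^T) = n(\theta_c - H\rho)$ rigorously in the space-form setting: one must differentiate the position vector field $X = \psi_c(d)\bar\nabla d$ along directions tangent to $M$, use that the Hessian of $d$ on $\bar M(c)$ has the explicit form $\theta_c(d)/\psi_c(d)$ times the metric on the orthogonal complement of $\bar\nabla d$ (so that $\bar\nabla_V X = \theta_c V$ for all $V$), then decompose $\bar\nabla_V X$ into tangent and normal parts using the Gauss and Weingarten formulas, and finally take the trace over an orthonormal frame of $TM$. Since the paper explicitly says the proof follows the approach of \cite{vlachos} and we may use Proposition~\ref{1}, I would in practice derive $\mathrm{div}(x^T)$ from part (b): $\nabla\rho = -A(x^T)$ gives control on the normal-direction behaviour, and combined with the formula for $\bar\nabla_V X$ one gets $\nabla_V x^T = \theta_c V - \rho A V$, whose trace is exactly $n\theta_c - n H\rho$. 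Everything else is the divergence theorem on a closed manifold.
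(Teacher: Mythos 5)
Your final, ``cleanest route'' is exactly the paper's proof: compute $\mathrm{div}(x^T)$ pointwise from $\bar\nabla_V X=\theta_c(d)V$ together with the Gauss and Weingarten formulas, take the trace, and integrate over the closed manifold. However, as written your key identity carries a sign error that makes the argument prove the wrong statement. With the paper's conventions --- the ones forced by Proposition~\ref{1}(b), $\nabla\rho=-A(x^T)$, i.e.\ $\bar\nabla_V N=-A(V)$ --- differentiating $x^T=X-\rho N$ gives
$$\bar\nabla_V x^T=\theta_c V-V(\rho)N+\rho A(V),$$
so the tangential part is $\nabla_V x^T=\theta_c V+\rho A(V)$ and hence $\mathrm{div}(x^T)=n(\theta_c+H\rho)$. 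You instead assert $\nabla_V x^T=\theta_c V-\rho A(V)$ and $\mathrm{div}(x^T)=n(\theta_c-H\rho)$, and then claim this integrates to ``precisely the claimed formula''; it does not --- it would give $\int_M(\theta_c-H\rho)\,dM=0$, which is not Proposition~\ref{Minkowski}. The sign you chose is incompatible with the very identity $\nabla\rho=-A(x^T)$ you invoke to justify it, so this is a concrete step to fix, even though the fix is a single sign.

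A secondary remark: the opening detour through Proposition~\ref{1}(a) and $\mathrm{div}(H\,x^T)$ cannot be made to ``collapse to the clean Minkowski formula.'' Eliminating $\int_M\langle\nabla H,x^T\rangle\,dM$ between those two integral identities leaves the terms $\int_M\rho|A|^2\,dM$ and $\int_M H^2\rho\,dM$, which do not cancel and are not controlled by the Gauss equation alone; what you get is a different (second-order) Minkowski-type identity, not the one claimed. You correctly abandon this route, but it should be deleted rather than left as a parallel argument. Keeping only the direct computation of $\mathrm{div}(x^T)$, with the corrected sign, reproduces the paper's proof verbatim.
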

\begin{proof}
Denote by $\nabla$ and $\bar{\nabla}$ the Riemannian connections of $M$ and $\bar{M}$, respectively. For any tangent vector $V$ to $M$ we have $\bar{\nabla}_VX=\theta_c(d)V$ (see \cite{vlachos}, page 338). Hence, by Weingarten formula we get $$\bar{\nabla}_Vx^T=\theta_c(d)V-V(\rho)N+\rho A(V).$$
Using $\bar{\nabla}_Vx^T=\nabla_Vx^T+\langle A(x^T),V\rangle N$ we obtain $$\nabla_Vx^T=\theta_cV+\rho A(V)-(V(\rho)+\langle A(x^T),V\rangle)N.$$
In particular, $$\nabla_Vx^T=\theta_cV+\rho A(V).$$
Let $\{e_1,...,e_n\}$ be an orthonormal local frame on $M$. Therefore, $$\textrm{div} (x^T)=\sum_{i=1}^n \langle \nabla_{e_i}x^T,e_i \rangle =n(\theta_c+H\rho ).$$
By integration we infer that $$\int_M (\theta_c + H \rho) \, dM = 0,$$ and the proof is concluded.
\end{proof}

 We recall the following fundamental characterization result for $M^n$ to be a biharmonic hypersurface in $\bar{M}^{n+1}(c)$, obtained by splitting the expression of the bitension field $\tau_2$ in its tangent and normal parts (see e.g. Ou \cite{ou3}, page 224, for a proof and  also Loubeau, Montaldo and Oniciuc \cite{loubeau} for case  of the submanifolds in an arbitrary ambient manifold).
 
 \begin{theorem}\label{2}
The isometric immersion $ x: M^n \to \bar{M}^{n + 1}(c)$ is biharmonic if and only if, satisfies the system of PDEs

$$
\left\{
\begin{array}{ll}
\Delta H=H|A|^2-ncH,\\
2A(\nabla H)=-nH\nabla H.
\end{array}  
\right.
$$
\end{theorem}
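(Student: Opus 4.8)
The plan is to obtain the two PDEs by decomposing the bitension field of the isometric immersion into its components tangent and normal to $M$, and showing that the vanishing of the normal part is equivalent to the first equation while the vanishing of the tangent part is equivalent to the second. Since $x$ is an isometric immersion, its tension field is $\tau(x)=n\textbf{H}=nHN$, so dividing the biharmonic condition $\tau_2(x)=0$ by $n$ reduces it to $\Delta^\nabla\textbf{H}-\mathrm{tr}\,[\bar{R}(dx,\textbf{H})dx]=0$, where $\Delta^\nabla$ is the rough Laplacian on sections of the pullback bundle $x^{-1}T\bar{M}$. First I would dispose of the curvature term. Because $\bar{M}^{n+1}(c)$ has constant curvature, $\bar{R}(U,V)W=c(\langle V,W\rangle U-\langle U,W\rangle V)$, and with a local orthonormal frame $\{e_1,\dots,e_n\}$ on $M$ the substitution $\textbf{H}=HN$, $\langle N,e_i\rangle=0$ gives $\mathrm{tr}\,[\bar{R}(dx,\textbf{H})dx]=\sum_i\bar{R}(e_i,HN)e_i=-ncHN$, which is purely normal.

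The core computation is the rough Laplacian of $HN$, which I would carry out at a fixed point $p$ using a frame geodesic at $p$ (so $\nabla_{e_i}e_j=0$ there, killing the correction term $\bar\nabla_{\nabla_{e_i}e_i}$). Using the Weingarten formula $\bar\nabla_{e_i}N=-A(e_i)$ together with the Gauss formula $\bar\nabla_{e_i}V=\nabla_{e_i}V+\langle A(e_i),V\rangle N$ for tangent $V$, one differentiates $\bar\nabla_{e_i}(HN)=(e_iH)N-HA(e_i)$ a second time and sums over $i$. Separating the normal and tangential contributions, the normal part collapses to $(\Delta H-H|A|^2)N$, where $\Delta H=\textrm{div}(\nabla H)$ as in Proposition \ref{1} and $\sum_i|A(e_i)|^2=|A|^2$, while the tangential part collapses to $-2A(\nabla H)-H\sum_i\nabla_{e_i}A(e_i)$. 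The step I expect to be the main obstacle—and the place where the constant-curvature hypothesis is genuinely used a second time—is evaluating $\sum_i\nabla_{e_i}A(e_i)=\textrm{div}\,A$. Here I would invoke the Codazzi equation, which in a space form says that $A$ is a Codazzi tensor, $(\nabla_UA)V=(\nabla_VA)U$; combined with the self-adjointness of $\nabla A$ this yields the classical identity $\textrm{div}\,A=\nabla(\mathrm{tr}\,A)=n\nabla H$, so the tangential part becomes $-2A(\nabla H)-nH\nabla H$.

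Assembling the pieces, $\Delta^\nabla\textbf{H}-\mathrm{tr}\,[\bar{R}(dx,\textbf{H})dx]$ has normal component $(\Delta H-H|A|^2+ncH)N$ and tangential component $-\bigl(2A(\nabla H)+nH\nabla H\bigr)$. Since the tangent and normal bundles of the hypersurface are complementary, the bitension field vanishes if and only if both components vanish, which are exactly $\Delta H=H|A|^2-ncH$ and $2A(\nabla H)=-nH\nabla H$; the converse direction is immediate since the same identities reconstitute $\tau_2(x)=0$. Throughout I would keep the sign conventions of \cite{urakawa} aligned with Proposition \ref{1}, so that the rough Laplacian restricts in the normal direction to $\textrm{div}(\nabla H)$ and the curvature term carries the sign displayed above; reconciling these signs, rather than any conceptual difficulty, is the only delicate bookkeeping in the argument.
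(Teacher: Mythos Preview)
Your proposal is correct and follows precisely the strategy the paper itself indicates: the paper does not give its own proof of this theorem but states that it is ``obtained by splitting the expression of the bitension field $\tau_2$ in its tangent and normal parts'' and refers to Ou \cite{ou3} (and to \cite{loubeau} for the general submanifold case) for the details. Your computation---evaluating the curvature trace using the constant-curvature form of $\bar R$, expanding the rough Laplacian of $HN$ via the Gauss and Weingarten formulas, and invoking Codazzi to identify $\mathrm{div}\,A=n\nabla H$---is exactly the standard argument one finds in those references, so there is nothing to contrast.
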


To conclude this section we present two identities that will be relevant
for our purposes. First, a straightforward calculation  yields 
\begin{equation}\label{laplaciano}
    \int_M(\Delta u)^2\,dM+\int_M\langle \nabla(\Delta u), \nabla u\rangle \,dM=0,
\end{equation}
where $M$ is a closed Riemannian manifold and $u$ a smooth function on $M$.

The next identity we need is the Bochner formula, which states that for any smooth function $u$ on $M$, we have
\begin{equation}\label{Bochner}
    \frac{1}{2}\Delta (|\nabla u|^2)= |\nabla ^2u|^2+\langle \nabla(\Delta u), \nabla u\rangle + \textrm{Ric}(\nabla u,\nabla u),
\end{equation}
where $\textrm{Ric}$ denotes the Ricci tensor of  $ M $ and $\nabla ^2u$ stands for the Hessian operator.

Making use of the inequality $|\nabla ^2u|^2\geq \frac{1}{n} (\Delta u)^2 ,$ it follows from (\ref{laplaciano}) and (\ref{Bochner}) that if $M$ is a closed Riemannian manifold then
\begin{equation}\label{Ricci}
    n\int_M\textrm{Ric}(\nabla u,\nabla u)\,dM+(n-1)\int_M\langle \nabla(\Delta u), \nabla u\rangle \,dM\leq 0.
\end{equation}

\section{Proof of Theorems}

In this final section we present the proofs of our results. For the reader's convenience, we restate the theorems.
Given an isometric immersion $x:M^n \to \bar{M}^{n + 1}(c)$, we recall that the Ricci tensor of $M$ is given by 
\begin{equation}\label{Ricci1}
    \textrm{Ric}(X,Y)=(n-1)c\langle X,Y \rangle+nH\langle A(X),Y \rangle -\langle A(X),A(Y)\rangle ,
\end{equation}
for any tangents vector fields $X,Y$ on $M.$

\begin{theorem}
Let $x:M^n \to \mathbb{S}^{n + 1}(c)$ be a closed biharmonic hypersurface. If $$\langle \nabla H^2, \nabla S \rangle \leq 0 ,$$ then $M$ has constant mean curvature.
\end{theorem}
\begin{proof}
From identity (\ref{scalar}) we have $$\nabla |A|^2=2n^2H\nabla H-n(n-1)\nabla S.$$

This implies that $$\langle \nabla(\Delta H), \nabla H\rangle =(|A|^2+2n^2H^2-nc)|\nabla H|^2-\frac{n(n-1)}{2}\langle \nabla H^2, \nabla S \rangle ,$$
taking into account the first equation in Theorem \ref{2}.

On the other hand, using (\ref{Ricci1}) and the second equation of Theorem \ref{2} to get $$\mathrm{Ric}(\nabla H,\nabla H)=((n-1)c-\frac{3}{4}n^2H^2)|\nabla H|^2.$$

Thus,  
\begin{eqnarray*}
n\mathrm{Ric}(\nabla H,\nabla H)+(n-1)\langle \nabla(\Delta H), \nabla H\rangle&=&(\frac{5}{4}n^3H^2-2n^2H^2+(n-1)|A|^2)|\nabla H|^2\\
&&-\frac{n(n-1)^2}{2}\langle \nabla H^2, \nabla S \rangle .
\end{eqnarray*}

From the well known inequality $|A|^2\geq nH^2$, with equality everywhere if and only if $M$ is umbilical, we obtain $$n\mathrm{Ric}(\nabla H,\nabla H)+(n-1)\langle \nabla(\Delta H), \nabla H\rangle \geq (\frac{5}{4}n^2-n-1)nH^2|\nabla H|^2-\frac{n(n-1)^2}{2}\langle \nabla H^2, \nabla S \rangle .$$

By inequality (\ref{Ricci}) and our hypothesis we conclude that \begin{eqnarray*}
0&\geq &(\frac{5}{4}n^2-n-1)n\int_MH^2|\nabla H|^2\,dM-\frac{n(n-1)^2}{2}\int_M\langle \nabla H^2, \nabla S \rangle \,dM \\
&\geq& (\frac{5}{4}n^2-n-1)n\int_MH^2|\nabla H|^2\,dM\geq 0.
\end{eqnarray*}
Hence, $H^2|\nabla H|^2=0$ on $M$, and this implies that
$M$ has constant mean curvature.
\end{proof}

Now we prove our second result:

\begin{theorem}
Let $x:M^n \to \bar{M}^{n + 1}(c)$ be a closed biharmonic hypersurface. Then,$$\int_M\theta_c(H^2-c)\,dM=0.$$
\end{theorem}
\begin{proof}
Multiplying by $H$ the first formula in Proposition \ref{1} and substituting the second identity of Theorem \ref{2} we get $$H\Delta(\rho)=-n\theta_cH^2-H\rho|A|^2+2\langle A(\nabla H), x^T \rangle .$$

Using the symmetry of operator $A$ and Proposition \ref{1} once more, it follows that $$H\Delta(\rho)=-n\theta_cH^2-H\rho|A|^2-2\langle \nabla H, \nabla \rho \rangle.$$

Integrating this identity and from the Divergence Theorem,  we get $$\int_MH\Delta (\rho)\,dM=-n\int_M\theta_cH^2\,dM-\int_MH\rho|A|^2\,dM+2\int_MH\Delta (\rho)\,dM.$$ 

Thus, $$-\int_M\rho \Delta (H)\,dM=-n\int_M\theta_cH^2dM-\int_MH\rho|A|^2\,dM.$$

By the first identity of Theorem \ref{2} we have, $$-\int_M\rho (H|A|^2-ncH) \,dM=-n\int_M\theta_cH^2\,dM-\int_MH\rho|A|^2\,dM.$$

So, it follows $$\int_McH \rho \, dM+\int_M\theta_cH^2\,dM=0.$$

Applying Minkowski formula of the Proposition \ref{Minkowski} we obtain $$\int_M\theta_c(H^2-c)\,dM=0.$$

This concludes the proof.
\end{proof}

Now let us consider the case of the unit sphere, that is 
$c=1$. If we choose $x_0 = e_{n+2}$ as the base point for the distance
function on $\mathbb S^{n+1}\subset \mathbb R^{n+2}$,
then we easily see that 
$\theta(d) = \cos d = \langle X, e_{n+2}\rangle$. 
Thus, setting $f=\langle X, e_{n+2}\rangle$ as in Vieira \cite{vieira}
we get 
$$\int_M f(H^2-1)\, dM= 0.$$ 
From this equality we have
\begin{theorem}[\cite{vieira}]
Let $M^n$ be a closed biharmonic hypersurface in a closed hemisphere of $\mathbb S^{n+1}$. 
If  $1-H^2$ does not change sign, then either 
$M^n$ is the equator of the hemisphere or it is the small hypersphere $\mathbb S^n(1/\sqrt 2)$.
\end{theorem}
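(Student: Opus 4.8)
The plan is to derive everything from the integral identity $\int_M f(H^2 - 1)\, dM = 0$ that was just established (the $c=1$ specialization of the second theorem), combined with one more integral identity relating $f$ and $H$. First I would invoke the hypothesis that $M^n$ lies in a closed hemisphere: choosing the hemisphere to be $\{p : \langle p, e_{n+2}\rangle \geq 0\}$, we get $f = \langle X, e_{n+2}\rangle = \cos d \geq 0$ on all of $M$. Together with the sign condition that $1 - H^2$ does not change sign, the integrand $f(H^2-1)$ has a fixed sign on $M$ (either always $\geq 0$ or always $\leq 0$, depending on the sign of $1-H^2$), yet it integrates to zero. Hence $f(H^2 - 1) = 0$ pointwise on $M$.

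From $f(1 - H^2) \equiv 0$ the argument splits. On the open set where $f > 0$ we must have $H^2 = 1$; if this set is everywhere dense (which it is unless $f$ vanishes on an open set), then $H^2 \equiv 1$ by continuity. The case $f \equiv 0$ on an open set needs to be ruled out: since $f = \cos d$ is (the restriction to $M$ of) a real-analytic function — indeed $d$ is smooth away from $x_0$ and its antipode, and $\cos d$ is real-analytic there — the zero set of $f$ on the connected manifold $M$ has empty interior unless $f \equiv 0$; but $f \equiv 0$ would force $M$ to lie in the totally geodesic equator $\mathbb{S}^n = \{f = 0\}$, and by dimension count $M$ equals that equator, which is minimal ($H \equiv 0$), giving one of the two conclusions directly. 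So in the remaining case $H^2 \equiv 1$ on $M$.

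Finally I would identify the hypersurface with $H^2 \equiv 1$ (so $H$ is a nonzero constant, $|H| = 1$). Plugging constant $H$ into the first biharmonic equation of Theorem \ref{2} with $c = 1$ gives $0 = \Delta H = H(|A|^2 - n)$, so $|A|^2 = n = nH^2$; by the equality case of $|A|^2 \geq nH^2$ recorded in the proof of the first theorem, $M$ is totally umbilical. A totally umbilical hypersurface of $\mathbb{S}^{n+1}$ with $|H| = 1$ is precisely a small hypersphere of radius $1/\sqrt{2}$ (this is the standard classification of umbilical hypersurfaces in the round sphere, matching the principal curvatures $\kappa = H = \pm 1$ to radius $r$ via $\kappa = \sqrt{1-r^2}/r$, hence $r = 1/\sqrt 2$), which is the known non-minimal biharmonic example $\mathbb{S}^n(1/\sqrt 2)$.

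The only genuinely delicate point is handling the possibility that $f$ vanishes on a set of positive measure without being identically zero; the real-analyticity of $\cos d$ on the hemisphere (away from the base point) is what closes that gap, and one should note that the base point $x_0 = e_{n+2}$ is the pole of the hemisphere and hence its antipode is the "south pole" lying outside the closed hemisphere, so $f = \cos d$ is real-analytic on a neighborhood of $M$ — this is where the hemisphere hypothesis does real work beyond just giving $f \geq 0$. Everything else is a direct assembly of the integral formula, the umbilicity argument from the first theorem's proof, and the elementary classification of round umbilical hypersurfaces.
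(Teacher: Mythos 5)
Your overall route is exactly the paper's: the paper deduces this theorem directly from the identity $\int_M f(H^2-1)\,dM=0$, using $f=\cos d\ge 0$ on the closed hemisphere and the sign hypothesis on $1-H^2$ to force $f(H^2-1)\equiv 0$, and your subsequent case analysis (equator if $f\equiv 0$; otherwise $H^2\equiv 1$, whence $|A|^2=n=nH^2$ by the first biharmonic equation, umbilicity, and the radius computation giving $\mathbb S^n(1/\sqrt2)$) is the intended completion and is correct in substance.

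The one step whose justification does not hold up as written is the claim that the zero set of $f|_M$ has empty interior unless $f\equiv 0$ \emph{because} $f$ is the restriction to $M$ of a real-analytic ambient function. The restriction of a real-analytic function to a merely smooth submanifold need not be real-analytic, and its zero set can very well have nonempty interior without the restriction vanishing identically --- indeed the scenario you are trying to exclude, namely $M$ containing an open piece of the level set $\{f=0\}$, is precisely a counterexample to the implication you invoke. (Real-analyticity of $f|_M$ would follow from real-analyticity of the biharmonic immersion itself, which is true by elliptic regularity, but that is a nontrivial fact you would need to cite rather than the analyticity of $\cos d$.) Fortunately you already have the ingredients for an elementary repair: if $f=0$ on a nonempty open set $U\subset M$, then by invariance of domain $U$ is an open piece of the totally geodesic equator, so $H\equiv 0$ on $U$; since $H^2\equiv 1$ on the closure of $\{f>0\}$, the connected manifold $M$ is covered by the two disjoint closed sets $\{H=0\}$ and $\{H^2=1\}$, and connectedness forces one of them to be empty, yielding exactly the dichotomy equator versus $H^2\equiv 1$. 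With that substitution the proof is complete and matches the paper's argument.
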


\section{Biharmoninc hypersurfaces and the Cheng-Yau operator}
\label{l1}

We end this paper with the following observation. 
The Newton operator $P_1:TM\to TM$ associated to the hypersurface $x$ is defined by $P_1=nHI-A.$ 
Consider the Cheng-Yau  operator $L_1$ given by 
$$
L_1(u)=\textrm{div}(P_1(\nabla u)),
$$ 
for any smooth function $u$ on $M.$ The second order linear differential operator $L_1$ arises as the linearized operator of the scalar curvature for normal variations of the hypersurface. If $M$ is closed, then
\begin{equation}\label{div}
        \int_MuL_1 (v)\, dM = - \int_M \langle P_1 (\nabla u), \nabla v \rangle \, dM,
\end{equation}
whenever  $ u $ and $ v $ are smooth functions on  $ M .$ We refer the readers to \cite{rosenberg} for more details about this operator.

We are in position to state and prove the following

\begin{proposition}
Let $x:M^n \to \mathbb{S}^{n + 1}$ be a closed biharmonic hypersurface with $H\geq 0.$ If $L_1(H)\geq 0$,  then $M$ has constant mean curvature.
\end{proposition}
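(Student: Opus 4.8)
The plan is to expand $L_1(H)=\mathrm{div}(P_1(\nabla H))$ using the biharmonic equations and then integrate against a suitable test function. Since $P_1=nHI-A$, we have $P_1(\nabla H)=nH\nabla H-A(\nabla H)$, and by the second equation of Theorem \ref{2}, $A(\nabla H)=-\tfrac{n}{2}H\nabla H$, so that $P_1(\nabla H)=\tfrac{3n}{2}H\nabla H=\tfrac{3n}{4}\nabla(H^2)$. Hence $L_1(H)=\tfrac{3n}{4}\Delta(H^2)=\tfrac{3n}{2}\bigl(H\Delta H+|\nabla H|^2\bigr)$. Substituting $\Delta H=H|A|^2-ncH$ from the first biharmonic equation (with $c=1$) gives the pointwise identity $L_1(H)=\tfrac{3n}{2}\bigl(H^2|A|^2-nH^2+|\nabla H|^2\bigr)$.

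Next I would integrate the hypothesis $L_1(H)\ge 0$ over $M$. Since $M$ is closed, $\int_M L_1(H)\,dM=\int_M \mathrm{div}(P_1(\nabla H))\,dM=0$ by the Divergence Theorem (or directly from \eqref{div} with $u\equiv 1$, $v=H$). Therefore the pointwise integrand, being nonnegative and having zero integral, must vanish identically: $H^2|A|^2-nH^2+|\nabla H|^2=0$ on $M$. Using the elementary inequality $|A|^2\ge nH^2$ we get $H^2|A|^2\ge nH^2\cdot H^2/\!\!\cdots$; more precisely $H^2|A|^2-nH^2\ge nH^2(H^2-1)$ is not obviously signed, so instead I would argue directly: each of the three terms rearranges as $|\nabla H|^2=nH^2-H^2|A|^2=H^2(n-|A|^2)$. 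Combined with $|A|^2\ge nH^2$, at any point where $H\neq 0$ this forces $|\nabla H|^2=H^2(n-|A|^2)\le H^2(n-nH^2)=nH^2(1-H^2)$, and one still needs more. The cleaner route: from $L_1(H)=\tfrac{3n}{4}\Delta(H^2)$ and $\int_M \Delta(H^2)\,dM=0$, the hypothesis $L_1(H)\ge0$ forces $\Delta(H^2)\equiv 0$, hence $H^2$ is a subharmonic-and-superharmonic function on a closed manifold, so $H^2$ is constant. Then $H$ is constant since $H\ge 0$.

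Thus the key structural point is the identity $P_1(\nabla H)=\tfrac{3n}{4}\nabla(H^2)$, which collapses $L_1(H)$ to a multiple of $\Delta(H^2)$; after that the conclusion is immediate from the maximum principle on closed manifolds. The hypothesis $H\ge 0$ is only used at the very end to pass from $H^2$ constant to $H$ constant. I expect the main (minor) obstacle to be verifying the constant $\tfrac{3n}{4}$ correctly from the sign conventions in Theorem \ref{2}, and making sure the step ``$L_1(H)\ge0$ and $\int L_1(H)=0$ imply $L_1(H)\equiv0$'' is stated with the right orientation; everything else is routine. An alternative presentation, parallel to the proof of Theorem \ref{2} above, would instead combine \eqref{div} (with $u=v=H$) with the pointwise identity to get $-\int_M\langle P_1(\nabla H),\nabla H\rangle\,dM=\int_M H L_1(H)\,dM\ge0$ while $\langle P_1(\nabla H),\nabla H\rangle=\tfrac{3n}{2}H|\nabla H|^2\ge0$, again forcing $H|\nabla H|^2\equiv0$ and hence $H$ constant; I would likely present this second version since it mirrors the earlier arguments most closely.
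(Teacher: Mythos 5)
Your proposal is correct, and the second version you sketch (pairing \eqref{div} with $u=v=H$ against the pointwise identity $\langle P_1(\nabla H),\nabla H\rangle=\tfrac{3n}{2}H|\nabla H|^2$ to force $H|\nabla H|^2\equiv 0$) is exactly the paper's proof; the abandoned middle digression via $\Delta H=H|A|^2-nH$ is indeed a dead end, but you correctly discard it. Your first route, observing that $P_1(\nabla H)=\tfrac{3n}{4}\nabla(H^2)$ so that $L_1(H)=\tfrac{3n}{4}\Delta(H^2)$ and then invoking the maximum principle, is a clean equivalent variant built on the same key identity.
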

\begin{proof}
Initially, using the definition of $P_1$ and the second equation of Theorem \ref{2}, we note that $$\langle P_1 (\nabla H), \nabla H \rangle = \frac{3n}{2}H|\nabla H|^2.$$

By hypothesis and the identity (\ref{div}), we get $$0\geq -\frac{3n}{2} \int_M H|\nabla H|^2\,dM=-\int_M\langle P_1 (\nabla H), \nabla H \rangle \, dM=\int_MHL_1 (H)\, dM \geq 0.$$

Therefore, $H|\nabla H|^2=0$ on $M$  and
the result then follows directly.
\end{proof}


\subsection*{Acknowledgements} 
The author wishes to thank Professor Marcos P. Cavalcante for his interesting comments and helpful discussion during the preparation of this article. 
The author is also grateful to Professors Dorel Fetcu and Cezar Oniciuc for their interest and many valuables suggestions that 
have improved this article.

\bibliographystyle{amsplain}
\bibliography{bib}
\end{document}